\documentclass{amsart}
\pagestyle{plain}

\newcommand{\notpaper}[1]{}

\usepackage[left=2.5cm,right=1cm,
    top=2cm,bottom=2cm,bindingoffset=0cm]{geometry}

\usepackage[T2A]{fontenc}
\usepackage[utf8]{inputenc}
\usepackage[russian]{babel}
\usepackage{amssymb}
\usepackage{a4wide}
\usepackage{amsfonts}

\makeatletter
\def\@settitle{\begin{center}%
    \baselineskip14\p@\relax
    \bfseries
    \@title
  \end{center}%
}
\makeatother

\usepackage{todonotes} 

\usepackage{amsfonts, amsmath, amssymb, amsthm, amscd, hyperref}

\usepackage{hyperref}
\hypersetup{
  colorlinks   = true, 
  urlcolor     = blue, 
  linkcolor    = blue, 
  citecolor   = red 
}

\usepackage{graphicx}

\newtheorem{theorem}{Теорема}

\newtheorem*{theorem*}{Теорема}

\newtheorem*{proposition*}{Предложение}
\newtheorem{lemma}[theorem]{Лемма}
\newtheorem*{lemma*}{Лемма}
\newtheorem{problem}[theorem]{Проблема}
\newtheorem*{problem*}{Проблема}
\newtheorem{claim}[theorem]{Утверждение}
\newtheorem*{claim*}{Утверждение}

\newtheorem*{conjecture*}{Гипотеза}
\newtheorem{corollary}[theorem]{Следствие}
\newtheorem*{corollary*}{Следствие}
\newtheorem{comment}[theorem]{Замечание}
\newtheorem*{comment*}{Замечание}

\theoremstyle{definition}

\theoremstyle{definition}
\newtheorem*{definition*}{Определение}

\newenvironment{lemmaproof}
{\par\noindent{\bf Доказательство. 
}}
{\hfill$\scriptstyle\blacksquare$}

\relpenalty=100
\binoppenalty=1000

\parindent=1.25cm

\usepackage{setspace}
\onehalfspacing

\author{Бикеев Артур Игоревич}
\thanks{$^*$ При поддержке РФФИ, грант № 19-01-00169. Выражаю благодарность своему научному руководителю А. Скопенкову, а также А. Воропаеву, В. Губареву, Е. Когану, А. Медных, И. Медных и В. Ретинскому
за полезные замечания по статье.}

\begin{document}

\title{Реализуемость дисков с ленточками на ленте Мёбиуса$^*$}

\maketitle

\begin{abstract}

Назовём \textit{иероглифом} циклическое 
слово длины $2n$ из $n$ букв, в котором каждая буква 
встречается дважды (стандартный термин: мультиграф с вращениями, имеющий одну вершину). 
Возьмём выпуклый многоугольник на плоскости.
Отметим на ограничивающей его ломаной $2n$ непересекающихся отрезков и обозначим их буквами из слова в том порядке, в каком эти буквы следуют в слове. 
Для каждой буквы соединим соответствующие два отрезка ленточкой так, чтобы различные ленточки не пересекались.
Любой полученный таким образом объект будем называть \textit{диском с ленточками, соответствующим данному иероглифу}.
Назовем иероглиф \textit{слабо реализуемым} на ленте Мёбиуса,  если из неё можно вырезать некоторый диск с ленточками, соответствующий данному иероглифу.
В работе приводится критерий слабой реализуемости, дающий квадратичный (по количеству букв) алгоритм. Известные критерии, основанные на формуле Эйлера и теореме Мохара, дают экспоненциальные алгоритмы.
Приведённый критерий также основан на критерии Мохара реализуемости диска с ленточками на ленте Мёбиуса.
\end{abstract}

\maketitle

\section*{Введение и основные результаты}


В работе строится квадратичный алгоритм распознавания вложимости графа с дополнительной структурой в ленту Мёбиуса. История вопроса излагается в замечании \ref{c:hist}. Основным результатом работы является теорема \ref{t:main}. Следующая алгебраическая лемма тесно с ней связана.

\begin{lemma}\label{l:main}
Пусть $M$ - симметричная матрица, элементами которой являются нули и единицы. Тогда следующие условия эквивалентны:

\begin{enumerate}
    \item Можно сделать такую одинаковую перестановку строк и столбцов\footnote{Это означает, что строки и столбы последовательно занумерованы числами от $1$ до $n$ и выбрана перестановка~$f$ этих чисел; строки и столбцы переставляются так, что $i,j$-ый элемент становится $f(i),f(j)$-ым.} матрицы $M$ и изменить некоторые элементы на главной диагонали таким образом, что в верхнем левом углу полученной матрицы будет стоять подматрица, заполненная единицами, вне которой стоят нули.
    \item 
Нельзя сделать такую одинаковую перестановку строк и столбцов матрицы $M$, что в верхнем левом углу полученной матрицы будет стоять подматрица вида
    $$P = \begin{pmatrix}
    *& 1& 1\\
    1& *& 0\\
    1& 0& *
\end{pmatrix}
{\rm \text{ или } }
Q = \begin{pmatrix}
    *& 1& 0& 0\\
    1& *& 0& 0\\
    0& 0& *& 1\\
    0& 0& 1& *\\
\end{pmatrix},
$$ где через $*$ обозначены произвольные (возможно, различные) элементы.
\footnote{Условиям (1)-(2) эквивалентно следующее условие: изменением некоторых элементов на главной диагонали можно из матрицы $M$ получить матрицу, ранг которой не превосходит $1$. Равносильность этого условия условию (1) очевидна.}
\end{enumerate}
\end{lemma}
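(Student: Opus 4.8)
The plan is to translate the statement into the language of graphs. To the matrix $M$ I would associate the graph $G$ on the vertex set $\{1,\dots,n\}$ in which distinct vertices $i,j$ are joined by an edge exactly when $M_{ij}=1$. A simultaneous permutation of rows and columns is simply a relabelling of the vertices of $G$ (and it preserves symmetry of $M$), while changing diagonal entries does not affect $G$ at all. Under this dictionary, condition (1) says precisely that $G$ is the disjoint union of a complete graph — on the index set of the all-ones block — and a (possibly empty) set of isolated vertices; and condition (2) says precisely that $G$ has no induced subgraph isomorphic to the three-vertex path $P_{3}$ (this is pattern $P$: distinct $i,j,k$ with $M_{ij}=M_{ik}=1$ and $M_{jk}=0$, the off-diagonal positions $2$ and $3$ of $P$ playing symmetric roles) and no induced subgraph isomorphic to $2K_{2}$, a pair of disjoint edges with no further edges among their four endpoints (this is pattern $Q$: $M_{ab}=M_{cd}=1$ and $M_{ac}=M_{ad}=M_{bc}=M_{bd}=0$). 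So the lemma becomes the well-known graph statement: a graph is the disjoint union of a clique and isolated vertices if and only if it contains no induced $P_{3}$ and no induced $2K_{2}$.

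The implication (1)$\Rightarrow$(2) I would verify directly on $G=K_{m}\sqcup\overline{K_{n-m}}$: any three distinct vertices span a triangle, a single edge, or no edge, so no induced $P_{3}$ occurs; and two independent edges would both lie inside the clique part (edges occur nowhere else), whence their four endpoints are pairwise adjacent and an induced $2K_{2}$ is impossible. For the converse (2)$\Rightarrow$(1) the decisive observation is that \emph{a graph with no induced $P_{3}$ is a disjoint union of cliques}: if some connected component were not complete, pick two non-adjacent vertices of it and a shortest path between them; its first three vertices induce a $P_{3}$. So $G$ splits into cliques; and if two of these cliques each had at least two vertices, choosing one edge from each would produce an induced $2K_{2}$. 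Hence at most one clique is non-trivial, every other vertex is isolated, and we are exactly in case (1).

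I do not anticipate a serious obstacle: once the reformulation is in place, both directions take only a few lines. The one place that needs genuine care is the reformulation itself — matching conditions (1) and (2) to the two graph statements above, using the symmetry of $M$ and the fact that the $*$'s (arbitrary diagonal entries) are invisible to $G$, and checking the degenerate cases in which $G$ has no edges (the all-ones block is empty or $1\times 1$, i.e.\ $M$ can be made identically zero by changing the diagonal). If one prefers instead the rank formulation of the footnote, one needs in addition the easy remark that a symmetric $0$–$1$ matrix of rank at most $1$ is, after a simultaneous permutation and a change of the diagonal, an all-ones square block padded with zeros.
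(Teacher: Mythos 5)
Your argument is correct, and it establishes exactly the right graph reformulation: condition (1) says the graph $G$ of $M$ is a clique plus isolated vertices, pattern $P$ is an induced path on three vertices, and pattern $Q$ is an induced pair of disjoint edges ($2K_2$). The paper has no self-contained proof of this lemma: the direction $(1)\Rightarrow(2)$ is dispatched by noting that for any diagonal the matrices $P$ and $Q$ have two distinct nonzero rows (so they cannot be principal submatrices of an all-ones block padded with zeros), and the direction $(2)\Rightarrow(1)$ is delegated to the proof of the implication $(4)\Rightarrow(3)$ of Theorem \ref{t:main}, which is the same elementary forbidden-induced-subgraph argument as yours but organized differently. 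There the author first deletes all isolated vertices and then derives a contradiction from a single non-adjacent pair $a,c$ in the remainder: a common neighbour produces the pattern $P$; otherwise private neighbours $b\sim a$ and $d\sim c$ produce $Q$ when $b\not\sim d$ and $P$ again (on $a,b,d$) when $b\sim d$. You instead factor the argument through the standalone characterization that a graph with no induced three-vertex path is a disjoint union of cliques (via the shortest-path argument), and then use the absence of $2K_2$ to rule out two non-trivial cliques. The two routes are of comparable length; yours isolates a reusable and standard lemma, while the paper's single case analysis has the advantage of transferring verbatim to the hieroglyph setting of Theorem \ref{t:main}, which is why the author proves it only once in that form. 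Your closing remarks on the degenerate cases and on the rank-$\le 1$ footnote are also accurate.
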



Импликация $(1) \Longrightarrow (2)$ легко следует из того, что при любой расстановке нулей и единиц на главной диагонали каждой из матриц $P$ и $Q$ верхняя строка и нижняя строка полученной матрицы будут ненулевыми и различными.


Импликация $(2) \Longrightarrow (1)$  леммы \ref{l:main} фактически доказана при доказательстве импликации $(4) \Longrightarrow (3)$ теоремы \ref{t:main},  см. замечание \ref{r:tl}.

\bigskip
Назовем \textbf{иероглифом} циклическое 
слово длины $2n$ из 
$n$ различных букв, в котором каждая буква встречается дважды 
(стандартные термины: хордовая диаграмма или мультиграф с вращениями, имеющий одну вершину).
Возьмём выпуклый многоугольник на плоскости.
Отметим на ограничивающей его ломаной непересекающиеся отрезки и
обозначим их буквами из слова в том порядке, в каком эти буквы следуют в слове. 
Для каждой буквы соединим соответствующие два отрезка ленточкой
(не обязательно в плоскости) так, чтобы
различные ленточки не пересекались. 
(Ленточки могут быть и перекрученные, и нет.)
Любой из полученных таким образом объектов будем называть \textbf{диском с ленточками}, соответствующим данному иероглифу, см. рис. \ref{f:ababcdcd_1}.

\begin{figure}[ht]
    \center{\includegraphics[scale=0.5, width=250pt]{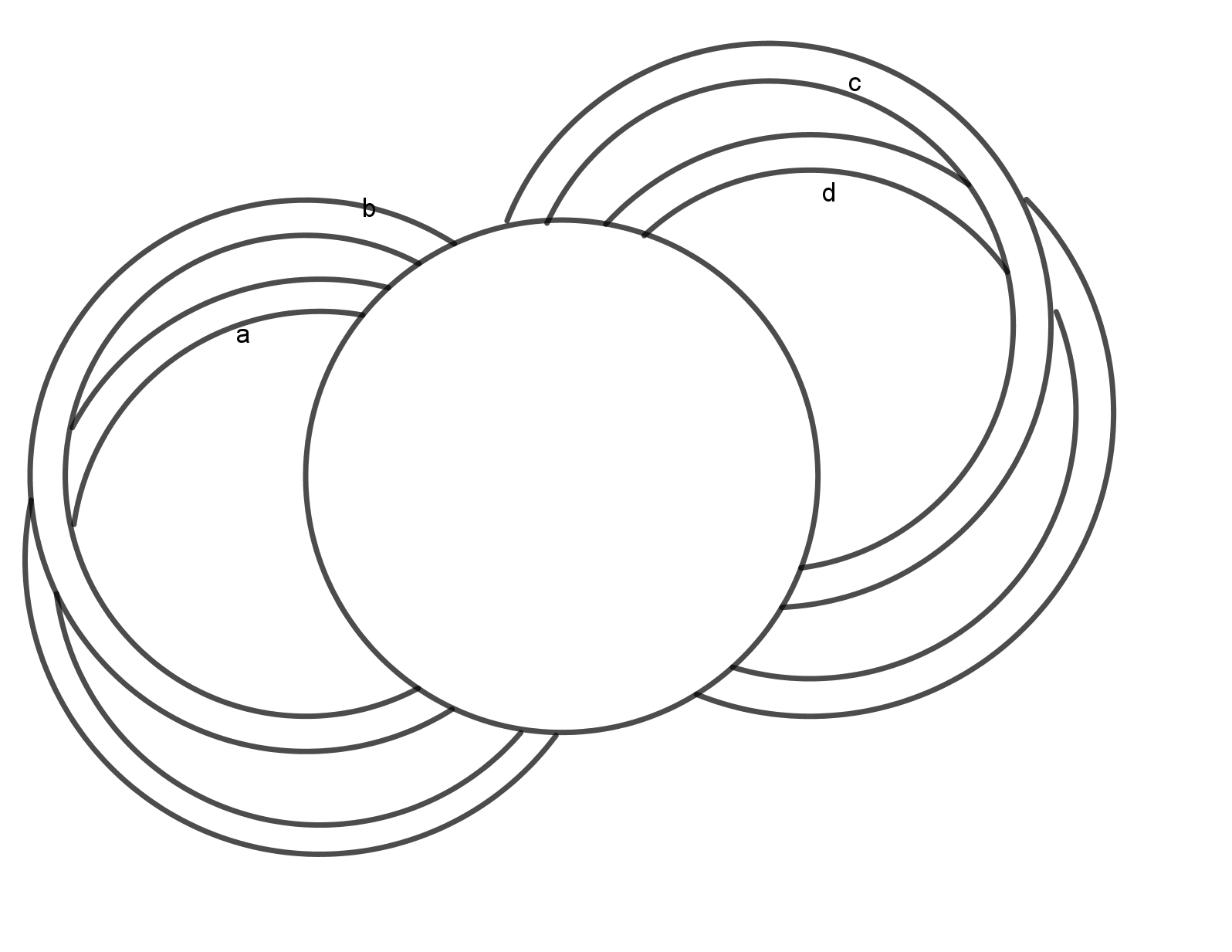}}
    \caption{Пример диска с ленточками, соответствующего иероглифу $ababcdcd$}
\label{f:ababcdcd_1}
\end{figure}

Назовем иероглиф \textbf{слабо реализуемым} на ленте Мёбиуса, если из неё можно вырезать некоторый диск с ленточками, соответствующий данному иероглифу (ср. с замечанием \ref{c:hist}).

\begin{theorem}[доказана далее]\label{t:alg}
Существует квадратичный алгоритм проверки слабой реализуемости иероглифа на ленте Мёбиуса.
\end{theorem}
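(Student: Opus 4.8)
The plan is to pass from weak realizability to a one‑line property of the interlacement graph of the hieroglyph and then verify that property in quadratic time; the whole gain over the naive Euler‑formula and Mohar approach is that Theorem~\ref{t:main} eliminates the exponential search over the $2^n$ possible disks with ribbons.

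To a hieroglyph $H$ with $n$ letters associate its interlacement matrix $M=M(H)$: the symmetric $0$-$1$ matrix with zero diagonal whose off‑diagonal entry $M_{ij}$ equals $1$ precisely when the two occurrences of the $i$-th letter separate the two occurrences of the $j$-th letter along the cyclic word. I would invoke the criterion of Theorem~\ref{t:main} (itself resting on Mohar's theorem): $H$ is weakly realizable on the Möbius band if and only if $M$ satisfies the equivalent conditions (1)--(2) of Lemma~\ref{l:main}. The reason behind this criterion is that a disk with ribbons for $H$ is determined up to homeomorphism by the subset $T$ of twisted ribbons, its Euler genus equals the $\mathbb F_2$-rank of the matrix obtained from $M$ by placing the indicator vector of $T$ on the diagonal, and a connected compact surface with boundary embeds in the Möbius band exactly when its Euler genus is at most $1$; hence weak realizability amounts to saying that some modification of the diagonal of $M$ produces a matrix of $\mathbb F_2$-rank at most $1$, which is condition (1) by the footnote to Lemma~\ref{l:main}.

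Next I would reformulate condition (2) graph‑theoretically. Reading $M$ as the adjacency matrix of a graph $G$ on $n$ vertices, a simultaneous permutation of rows and columns that brings $P$ to the top‑left corner is exactly the choice of an induced path on three vertices in $G$, and one that brings $Q$ there is exactly the choice of two disjoint induced edges; so condition (2) says that $G$ has no induced $P_3$ and no induced $2K_2$. A $P_3$-free graph is a disjoint union of cliques, and a disjoint union of cliques with no induced $2K_2$ has at most one clique of size at least $2$. Thus condition (2) holds if and only if $G$ is a single clique together with some isolated vertices --- a property one checks by forming the set $S$ of non‑isolated vertices and verifying that every vertex of $S$ has degree $|S|-1$.

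This gives the algorithm: read $H$ and record the two positions of each letter ($O(n)$); for each of the $\binom n2$ pairs of letters decide in constant time from those positions whether the letters interlace, thereby writing down $G$ ($O(n^2)$); compute all degrees, extract $S$, and run the degree check ($O(n^2)$); declare $H$ weakly realizable if and only if the check succeeds. Correctness is immediate from the two preceding paragraphs, and the running time is $O(n^2)$. The main obstacle is precisely that a literal implementation of the criterion --- searching for the forbidden $3\times3$ or $4\times4$ submatrices, or, far worse, looping over all $2^n$ disks with ribbons as the classical Euler‑formula and Mohar criteria force one to --- is only super‑quadratic or outright exponential; the point is to use Lemma~\ref{l:main} to recognize that the obstruction‑free hieroglyphs are exactly those whose interlacement graph is a clique plus isolated vertices, after which only linear work remains beyond the construction of $G$, which itself must be done from precomputed letter positions rather than by rescanning the word for each pair.
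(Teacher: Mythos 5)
Your proposal is correct and follows essentially the same route as the paper: both reduce weak realizability via Theorem~\ref{t:main} to the statement that the interlacement graph of the hieroglyph is a clique together with isolated vertices, and both verify this in $O(n^2)$ by building the graph from the letter positions and running a degree/clique check. Your detour through condition (2) of Lemma~\ref{l:main} (no induced $P_3$ or $2K_2$) instead of condition (3) of the theorem is only a cosmetic difference, since both characterizations collapse to the same clique-plus-isolated-vertices test.
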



Будем говорить, что две различные буквы $a$ и $b$ иероглифа \textbf{перекрещиваются}, если они идут в этом иероглифе в чередующемся порядке ($abab$, а не $aabb$).

Иероглифы считаются равными, если получаются один из другого взаимно однозначной заменой букв или осевой симметрией.

\begin{theorem}\label{t:main}
Пусть $H$ --- иероглиф. Следующие условия эквивалентны:
\begin{enumerate}
    \item 
Иероглиф $H$
слабо реализуем на ленте Мёбиуса.
    \item Буквы иероглифа $H$ можно раскрасить в красный и синий цвета так, что любые две красные буквы перекрещиваются, а никакая синяя буква не перекрещивается ни с какой другой буквой.
    \item 
Операциями
удаления пар одинаковых букв, таких, что каждая из этих букв не перекрещивается ни с какой другой буквой иероглифа $H$, можно 
свести $H$
к 
иероглифу вида
$a_1 a_2 ... a_m a_1 a_2 ...a_m$ (возможно, $m=0$).
    \item Из $H$ операциями удаления пар одинаковых букв нельзя получить иероглифы $abcacb$ и $ababcdcd$.
    \footnote{Условиям (1)-(4) эквивалентно следующее условие: найдется такая матрица над $\mathbb{Z}_2$ ранга не более $1$ размера $n\times n$, где $n$ - число различных букв 
в иероглифе
$H$, что в 
клетках
вне главной диагонали матрицы стоит $0$, если соответствующие буквы иероглифа $H$ не перекрещиваются, и $1$ в противном случае. Равносильность этого условия и условия (1) является частным случаем теоремы \ref{t:mohar} Мохара, сформулированной далее.}
\end{enumerate}
\end{theorem}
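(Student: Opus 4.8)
The plan is to establish the cycle of implications $(1)\Rightarrow(2)\Rightarrow(3)\Rightarrow(4)\Rightarrow(1)$, since each of these has a natural ``direction'' in which it is cleanest.

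For $(1)\Rightarrow(2)$ I would use Mohar's criterion (Theorem~\ref{t:mohar}), which according to the footnote says that weak realizability on the Möbius band is equivalent to the existence of an $n\times n$ matrix over $\mathbb{Z}_2$ of rank at most $1$ whose off-diagonal entries record the crossing pattern of $H$. A rank-$\le 1$ symmetric matrix over $\mathbb{Z}_2$ is of the form $vv^T$ (plus possibly a diagonal correction, but the diagonal is free here) for some vector $v\in\mathbb{Z}_2^n$; colour a letter red if the corresponding coordinate of $v$ is $1$ and blue otherwise. Then two letters cross iff the product of their $v$-coordinates is $1$, i.e. iff both are red — which is exactly condition~(2) (blue letters cross nothing, red letters pairwise cross). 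This is really just unwinding Lemma~\ref{l:main}, whose equivalence of (1) and the rank condition is noted to be obvious, so the genuine content is the reduction to Mohar, and here I am allowed to quote Theorem~\ref{t:mohar}.

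For $(2)\Rightarrow(3)$: given a red/blue colouring, every blue letter crosses nothing, so in particular each blue letter, together with its partner, can be deleted by the allowed operation; after removing all blue letters we are left with a sub-ieroglyph all of whose letters pairwise cross. I then need the combinatorial fact that an ieroglyph in which every two letters cross must be of the form $a_1a_2\dots a_m a_1a_2\dots a_m$. This I would prove by induction on $m$: pick the letter $a_1$ whose first occurrence is first in the cyclic word; its two copies split the circle into two arcs, and every other letter, crossing $a_1$, must have exactly one endpoint in each arc; reading off the cyclic order forces the interleaved pattern, and restricting to the remaining letters gives a smaller all-crossing ieroglyph to which induction applies. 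The implication $(3)\Rightarrow(4)$ is immediate: the deletion operations in (4) are a superset of those in (3) (we need not check the partner), and one checks directly that neither $abcacb$ nor $ababcdcd$ can be reduced to the standard form $a_1\dots a_m a_1\dots a_m$ by such deletions (in each, every letter crosses some other letter, so no deletion is available, yet neither has the standard form), so if (3) holds then these obstructions cannot be produced.

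The main work, and the expected obstacle, is $(4)\Rightarrow(1)$, which the excerpt explicitly says is the heart of the matter and is tied to the hard direction $(2)\Rightarrow(1)$ of Lemma~\ref{l:main} (see the promised Remark~\ref{r:tl}). Here the plan is contrapositive combined with a minimal-counterexample / structural argument: assume $H$ is not weakly realizable, i.e. by Mohar no rank-$\le1$ matrix with the prescribed off-diagonal crossing pattern exists; equivalently, via Lemma~\ref{l:main}(2), after a simultaneous permutation of rows and columns the crossing matrix contains a principal submatrix of the forbidden type $P$ or $Q$. A submatrix of type $P$ corresponds to three letters with crossing pattern ``$a$ crosses $b$, $a$ crosses $c$, $b$ does not cross $c$'', which is precisely the ieroglyph $abcacb$ (up to relabelling and symmetry); a submatrix of type $Q$ corresponds to four letters splitting into two crossing pairs with no crossings between the pairs, which is $ababcdcd$. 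So the task reduces to showing that if $H$ contains such a triple or quadruple of letters (as an induced crossing pattern), then by deleting pairs of letters one can actually reach the ieroglyph $abcacb$ or $ababcdcd$ on the nose — i.e. that an ``induced forbidden crossing pattern'' can be realized as an honest sub-ieroglyph after deletions. I would argue that deleting one letter at a time from a minimal $H$ carrying such a pattern either keeps the pattern (contradicting minimality unless we are already at size $3$ or $4$) or one analyses how removing a letter changes crossings; the delicate point is that deletion is only allowed for letters, not arbitrary restriction, so I must check that in a size-minimal offending ieroglyph every letter outside the chosen triple/quadruple can indeed be removed — which should follow because otherwise that letter's crossing relations would let us find a smaller offending pattern. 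Packaging this carefully, and making the translation between the matrix conditions of Lemma~\ref{l:main} and the deletion operations on ieroglyphs precise, is where the real effort lies; once done, Remark~\ref{r:tl} records that the same argument yields $(2)\Rightarrow(1)$ of the lemma.
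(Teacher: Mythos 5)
Your cycle $(1)\Rightarrow(2)\Rightarrow(3)\Rightarrow(4)\Rightarrow(1)$ is a genuinely different arrangement from the paper's ($(1)\Rightarrow(4)\Rightarrow(3)\Leftrightarrow(2)\Rightarrow(1)$), and the three easy links you propose are essentially sound: $(1)\Rightarrow(2)$ via Theorem \ref{t:mohar} and the factorization $vv^{T}$ of a symmetric rank-$\le 1$ matrix over $\mathbb{Z}_2$ is a clean version of the paper's footnote, and $(2)\Rightarrow(3)\Rightarrow(4)$ reduce to easy facts (an all-crossing hieroglyph has the form $a_1\dots a_m a_1\dots a_m$; condition (2) is inherited by sub-hieroglyphs and fails for $abcacb$ and $ababcdcd$ --- note that your stated justification of $(3)\Rightarrow(4)$ tacitly uses this heredity, which is cleanest to see through condition (2)). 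The problem is that your rearrangement pushes all of the real content into $(4)\Rightarrow(1)$, and there you discharge it by quoting the implication $(2)\Rightarrow(1)$ of Lemma \ref{l:main} (no rank-$\le 1$ diagonal completion implies the matrix contains $P$ or $Q$ as a principal submatrix). That is precisely the hard direction of the lemma, which the paper does not prove independently: it is obtained only as a byproduct of the combinatorial argument proving $(4)\Rightarrow(3)$ of the theorem (see Remark \ref{r:tl}). So, as written, your proposal contains no proof of the one nontrivial step.

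Moreover, the part of $(4)\Rightarrow(1)$ you flag as delicate is actually trivial, while the genuinely hard part is the one you leave to the lemma. Deleting pairs of identical letters in condition (4) is unrestricted, so it is exactly restriction to an arbitrary subset of letters; if three (resp.\ four) letters of $H$ induce the crossing pattern of $P$ (resp.\ $Q$), deleting all the others yields a $3$- (resp.\ $4$-)letter hieroglyph with that interlacement graph, and a short enumeration shows such a hieroglyph must equal $abcacb$ (resp.\ $ababcdcd$). No minimal-counterexample argument is needed there. What is missing is the argument that a crossing pattern with no rank-$\le 1$ completion must contain such a triple or quadruple --- equivalently, the paper's proof of $(4)\Rightarrow(3)$: discard the letters that cross nothing; if two remaining letters $a,c$ do not cross, then either some $b$ crosses both (giving $abcacb$), or there exist $b$ crossing $a$ but not $c$ and $d$ crossing $c$ but not $a$, and then $b,d$ crossing gives $abcacb$ while $b,d$ not crossing gives $ababcdcd$; hence all remaining letters pairwise cross. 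Without this case analysis (or an independent proof of Lemma \ref{l:main}, implication $(2)\Rightarrow(1)$), your proposal is a reorganization of the easy implications around a hole where the theorem's content should be.
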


В теореме \ref{t:main} импликации  (3) $\Longleftrightarrow$ (2)$\Longrightarrow$ (1) очевидны. Импликации (1) $\Longrightarrow$ (4) $\Longrightarrow$ (3) доказаны далее.

\begin{comment}[\textbf{история проблемы}]\label{c:hist}
Базовые ссылки по данной теме: \cite{MT01},{\cite[$\S 2$]{Sk20}}, \cite{LZ}. По поводу обобщения на произвольные графы см. \cite{MT01},{\cite[$\S 2$]{Sk20}}, \cite{LZ} и \cite{Ko20}
. По поводу связи с интегрируемыми гамильтоновыми системами см. \cite{BFM90}. 

Известны полиномиальные алгоритмы распознавания вырезаемости конкретного двумерного многообразия из ленты Мёбиуса, например, использующие эйлерову характеристику или теорему \ref{t:mohar} Мохара.


Однако из существования этих алгоритмов не следует существование полиномиального алгоритма распознавания слабой реализуемости иероглифа на ленте Мёбиуса. Действительно, каждому иероглифу с $n$ парами букв соответствует не одно, а $2^n$ двумерных многообразий (дисков с ленточками), так как каждая ленточка может быть либо перекрученной, либо нет. Слабая реализуемость иероглифа на ленте Мёбиуса эквивалентна вырезаемости из ленты Мёбиуса хотя бы одного из них. 
\end{comment}

\section*{Доказательства и нерешённая проблема}


\begin{proof}[\textbf{Доказательство импликации (4) $\Longrightarrow$ (3) теоремы \ref{t:main}}]
    Пусть для иероглифа $H$ выполняется условие (4). Обозначим через $H_1$ иероглиф, получаемый из иероглифа $H$ удалением всех таких букв, которые не перекрещиваются ни с одной другой. Условие (4) выполняется и для иероглифа $H_1$. Предположим, что в иероглифе $H_1$ есть пара не перекрещивающихся букв $a$ и $c$.
    
    Если найдется буква $b$, перекрещивающаяся и с $a$, и с $c$, то, удалив из $H_1$ все буквы, кроме $a$, $b$ и $c$, получим иероглиф $abacbc$, равный иероглифу $abcacb$,  что дает противоречие с условием (4). Значит, любая буква, отличная от $a$ и $c$, перекрещивается не более чем с одной из них.

Следовательно,
в силу того, что 
в иероглифе
$H_1$ любая буква перекрещивается хотя бы с одной другой, найдутся две различные буквы $b$ и $d$ такие, что $b$ перекрещивается с $a$, но не перекрещивается с $c$, а $d$ перекрещивается с $c$, но не перекрещивается с $a$. Поэтому если буквы $b$ и $d$ не перекрещиваются, то, удалив все буквы, кроме $a$, $b$, $c$ и $d$, получим иероглиф $ababcdcd$. Если же $b$ и $d$ перекрещиваются, то, удалив все буквы, кроме $a$, $b$ и $d$, получим иероглиф $badbda$, равный иероглифу $abcacb$. Во всех случаях получили противоречие с тем, что $a$ и $c$ не перекрещиваются. Значит, любые две буквы иероглифа $H_1$ перекрещиваются, 
и тогда
он имеет вид $a_1 a_2 ... a_n a_1 a_2 ... a_n$. Следовательно, иероглиф $H$ удовлетворяет условию (3).

\end{proof}

\textbf{Матрицей перекрещиваний} диска с $n$ ленточками $D$ называется матрица размера $n \times n$, у которой

$\bullet$ на главной диагонали стоит $1$, если соответствующая ленточка перекручена, и $0$ 
в противном случае,
и 

$\bullet$ вне главной диагонали на пересечении строки $i$ и столбца $j$ стоит $0$, если соответствующие ленточки 
не перекрещиваются, и $1$ в противном случае.

Например, матрицей перекрещиваний любого диска с ленточками, соответствующего иероглифу $abcacb$, является матрица вида $P$ (см. лемму \ref{l:main}), в которой элементы на главной диагонали зависят от перекрученности ленточек.
\notpaper{
$$\begin{pmatrix}
*& 1& 1\\
1& *& 0\\
1& 0& *
\end{pmatrix}$$
(элементы на главной диагонали зависят от перекрученности ленточек.)}

\begin{figure}[h]
\center{\includegraphics[scale=0.5, width=200pt]{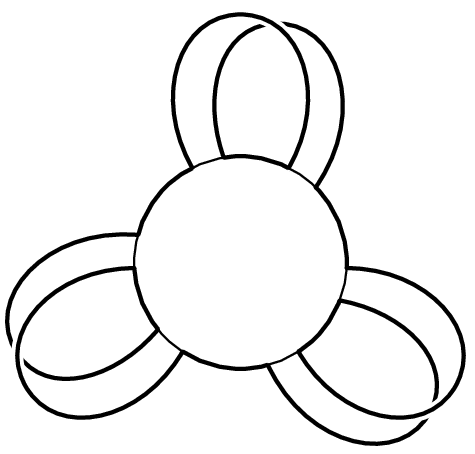}}
\caption{Диск с тремя лентами Мёбиуса}
\label{f:disk}
\end{figure}

\textbf{Диском с $m$ лентами Мёбиуса} (см. рис. \ref{f:disk}) называется объединение круга и $m$  ленточек, 
в
котором

$\bullet$ каждая ленточка приклеивается двумя отрезками к граничной окружности $S$ 
круга, а
направления на этих
отрезках, задаваемые произвольным направлением на~$S$, <<сонаправлены вдоль ленточки>>,

$\bullet$ ленточки 
<<отделены>>,
т. е. приклеены к $2m$ попарно не пересекающимся отрезкам на~$S$.

\textbf{Рангом над $\mathbb{Z}_2$} матрицы из нулей и единиц будем называть размерность пространства ее строк над полем $\mathbb{Z}_2$.

 Следующая теорема была сформулирована и применена в \cite{Mo89}.

\begin{theorem}\label{t:mohar}
Диск с ленточками можно вырезать из диска с $m$ лентами Мёбиуса тогда и только тогда, когда ранг над $\mathbb{Z}_2$ матрицы перекрещиваний этого диска с ленточками не превосходит $m$.
\end{theorem}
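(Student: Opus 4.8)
The plan is to reduce everything to linear algebra over $\mathbb{Z}_2$ via the intersection form on $\mathbb{Z}_2$-homology. For a disk with ribbons $D$, let $\gamma_1,\dots,\gamma_n$ be the core curves of the $n$ ribbons; they form a basis of $H_1(D;\mathbb{Z}_2)$. The first point is that the crossing matrix $M$ of $D$ is exactly the Gram matrix, in this basis, of the $\mathbb{Z}_2$-intersection pairing on $H_1(D;\mathbb{Z}_2)$, with the convention that the diagonal entry $\gamma_i\cdot\gamma_i$ is the mod~$2$ self-intersection number. Off the diagonal this is the elementary fact that two chords of a convex polygon can be perturbed to meet in its interior an odd number of times precisely when their endpoints interleave, i.e.\ when the two letters cross; on the diagonal it is the standard fact that a simple closed curve has mod~$2$ self-intersection $1$ iff a regular neighbourhood of it is a Möbius band, i.e.\ iff the ribbon is twisted. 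Hence $\mathrm{rk}_{\mathbb{Z}_2}M$ depends only on $D$: it is the rank of the intersection form on $H_1(D;\mathbb{Z}_2)$, and since that form has radical of dimension $b-1$ (spanned by the classes of the $b$ boundary circles), it equals $n-(b-1)=2-\chi(\widehat D)$, the Euler genus of the closed surface $\widehat D$ obtained by capping the boundary circles of $D$ with disks.

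For the direction ``cut out $\Longrightarrow$ rank $\le m$'' I would argue by naturality. If $D$ lies in the disk with $m$ Möbius bands $B_m$ as a subsurface, then $\chi(B_m)=1-m$, so $H_1(B_m;\mathbb{Z}_2)$ has dimension $m$; intersection numbers, self-intersections included, are computed in a neighbourhood and so are preserved by the map $H_1(D;\mathbb{Z}_2)\to H_1(B_m;\mathbb{Z}_2)$ induced by the inclusion. Thus $M$ is the Gram matrix of the $n$ images of the $\gamma_i$, which lie in an $m$-dimensional space, and therefore $\mathrm{rk}_{\mathbb{Z}_2}M\le m$.

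For the converse, put $r=\mathrm{rk}_{\mathbb{Z}_2}M$. Since $B_r$ is a subsurface of $B_m$, it suffices to realise $D$ inside $B_r$. I would first simplify the ribbon presentation of $D$ by handle slides of one ribbon over another: such a move replaces a core $\gamma_i$ by $\gamma_i+\gamma_j$, leaves the homeomorphism type of $D$ unchanged, and acts on $M$ by a congruence $M\mapsto E^{\mathsf T}ME$ with $E$ elementary --- the same bookkeeping that underlies Lemma~\ref{l:main} and the proof of $(4)\Longrightarrow(3)$ in Theorem~\ref{t:main}. Together with relabelling the ribbons, such moves bring $M$ to a block form with an $r\times r$ nondegenerate ``core'' block and $n-r$ further ribbons that are untwisted and cross nothing; the latter only add extra boundary circles and planar holes and are carved out of $B_r$ with room to spare. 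Over $\mathbb{Z}_2$ a nondegenerate symmetric $r\times r$ matrix is congruent either to $I_r$ or, when the associated quadratic function vanishes, to a sum of $\tfrac r2$ blocks $\begin{pmatrix}0&1\\1&0\end{pmatrix}$; in the first case one runs the $r$ core ribbons one along each band of $B_r$, and in the second one pairs them up and routes each pair so as to lie inside $B_r$. Reassembling exhibits $D$ inside $B_r\subseteq B_m$.

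The real work, I expect, is in this converse, and in two places in particular: verifying that the handle slides genuinely implement the $\mathbb{Z}_2$-linear-algebra normal form while keeping $D$ fixed up to homeomorphism, and the concrete routing of the core ribbons inside $B_r$ --- above all carrying the ``handle'' normal form $\bigoplus\begin{pmatrix}0&1\\1&0\end{pmatrix}$ inside the non-orientable surface $B_r$, which is the delicate interface between the orientable normal form and a non-orientable ambient surface.
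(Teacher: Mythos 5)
The paper does not actually prove Theorem~\ref{t:mohar}: it is imported from \cite{Mo89}, with the one-line remark that it follows by a simple application of the intersection form of a $2$-manifold and pointers to \cite{Sk20}. Your first half is precisely that application --- identifying the crossing matrix with the Gram matrix of the mod~$2$ intersection form in the basis of core curves, computing its radical, and deducing by naturality that if the disk with ribbons is cut out of the disk with $m$ Möbius bands then the rank is at most $m$. That direction is correct and complete; you have carried out what the paper only gestures at.

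The converse contains a genuine gap, located exactly where you flag it: the hyperbolic block $\left(\begin{smallmatrix}0&1\\1&0\end{smallmatrix}\right)$ cannot be routed inside the disk with two Möbius bands $B_2$. Two interleaved untwisted ribbons on a disk form a punctured torus $T_{1,1}$, whose crossing matrix is that block, of rank $2$; but $T_{1,1}$ does not embed in $B_2$, the punctured Klein bottle: both have Euler characteristic $-1$, so the complement of such an embedding would be an annulus and $B_2$ would be orientable. Your own forward argument, sharpened from the rank to the isomorphism type of the form, says the same thing: $H_1(B_2;\mathbb{Z}_2)$ with its intersection form $I_2$ contains exactly one nonzero isotropic class, so no two classes can have Gram matrix $\left(\begin{smallmatrix}0&1\\1&0\end{smallmatrix}\right)$. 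Hence the alternating case of your normal form is not a delicate routing problem but an actual obstruction --- and in fact a counterexample to Theorem~\ref{t:mohar} as literally stated for $m\ge 2$: the correct statement must exclude orientable disks with ribbons of positive genus whose rank equals $m$, or else speak of cutting out of some surface of Euler genus $m$ rather than of the disk with $m$ Möbius bands. In the only case the paper actually uses, $m=1$, your scheme does close up: rank at most $1$ forces the form to be zero or non-alternating of rank one, so the disk with ribbons is a planar surface or a projective plane with holes, and both visibly sit inside the Möbius band.
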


 Теорему \ref{t:mohar} можно доказать, например, несложным применением формы пересечений двумерного многообразия.  См. также {\cite[утв. 2.8.8(с)]{Sk20}} и {\cite[утв. 6.7.7]{Sk20}}.

\begin{proof}[\textbf{Доказательство импликации (1) $\Longrightarrow$ (4) теоремы \ref{t:main}}]
При любой расстановке нулей и единиц на главной диагонали каждой из матриц $P$ и $Q$ из леммы \ref{l:main} две верхние строки полученной матрицы будут ненулевыми и различными.
Матрицы $P$ и $Q$ являются матрицами перекрещиваний дисков с ленточками, соответствующих иероглифам $abcacb$ и $ababcdcd$. Поэтому, по теореме Мохара, иероглифы $abcacb$ и $ababcdcd$ не являются слабо реализуемыми на ленте Мёбиуса.
\end{proof}

\begin{comment}[\textbf{связь леммы \ref{l:main} и теоремы \ref{t:main}}]\label{r:tl}

Теорема \ref{t:main} является топологической версией леммы \ref{l:main}. При этом условие (1) леммы аналогом условия (2) теоремы, условие (2) леммы является аналогом условия (3) теоремы. Матрицы перекрещиваний любых дисков с ленточками, соответствующих иероглифу $H$ из теоремы, получаются друг из друга одинаковыми перестановками столбцов и изменением некоторых элементов на главной диагонали. Произвольной из этих матриц можно поставить в соответствие матрицу $M$ из леммы. Тогда применение к иероглифу $H$ нескольких операций удаления одинаковых букв соответствует взятию левого верхнего угла некоторой матрицы, полученной из $M$ одинаковыми перестановками столбцов и изменением некоторых элементов на главной диагонали. Каждая буква иероглифа соответствует симметричным относительно главной диагонали строке и столбцу матрицы $M$. Матрицы $P$ и $Q$ в пункте $(2)$ леммы являются аналогами иероглифов $abcacb$ и $ababcdcd$ в пункте $(4)$ теоремы. Доказательство импликации $(2) \Longrightarrow (1)$  леммы \ref{l:main} аналогично доказательству импликации $(4) \Longrightarrow (3)$ теоремы \ref{t:main}.

\end{comment}

\begin{proof}[\textbf{Доказательство теоремы \ref{t:alg}}]
Условие (3) из теоремы \ref{t:main} проверяется за квадратичное (относительно длины иероглифа) время. Докажем это. Построим граф $G$, вершины которого соответствуют буквам иероглифа $H$, 
а две
вершины соединены ребром, если соответствующие буквы перекрещиваются. Эта процедура занимает квадратичное относительно длины иероглифа время. Тогда условие (3) эквивалентно тому, что $G$ есть объединение клики и, возможно, нескольких изолированных вершин. Проверка всех вершин на изолированность занимает квадратичное время. После этого проверка того, что все не изолированные вершины образуют клику, также занимает квадратичное время.
\end{proof}

\begin{problem}\label{problem}
Пусть дана матрица $M$ размера $n \times n$ из нулей и единиц, на диагонали которой 
стоят нули.
Обозначим через $R(M)$ наименьший ранг над $\mathbb Z_2$ матрицы, полученной из $M$ изменением 
каких-нибудь
диагональных элементов. Найти быстрый (по $n$) алгоритм, вычисляющий $R(M)$.
\end{problem}

\begin{corollary}[из теоремы \ref{t:mohar}]\label{c:R}
Диск с $n$ ленточками с матрицей перекрещиваний $M$ реализуем на диске с $m$ лентами Мёбиуса тогда и только тогда, когда $R(M) \leq m$. 
\end{corollary}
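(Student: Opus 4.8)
The plan is to deduce the corollary directly from Mohar's theorem (Theorem \ref{t:mohar}); the only genuinely new ingredient is the dictionary between twisting a ribbon and toggling a diagonal entry of the crossing matrix. First I would make precise what ``realizable on a disk with $m$ M\"obius bands'' means for a disk with ribbons: it is understood here (as for hieroglyphs) in the weak sense, namely that \emph{some} disk with ribbons corresponding to the same hieroglyph as the given one --- equivalently, obtained from it by inserting or deleting half-twists in some of its ribbons --- can be cut out of a disk with $m$ M\"obius bands. I would also note that the number $R(M)$ from Problem \ref{problem} depends only on the off-diagonal part of $M$, since the admissible operations overwrite the diagonal; so the hypothesis that $M$ has zeros on the diagonal (i.e. that the given disk with ribbons has no twisted ribbon) is only a normalization.

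The main step is to describe the crossing matrices of all disks with ribbons corresponding to a fixed hieroglyph. Fix such a disk $D$ with crossing matrix $M$. Every disk with ribbons corresponding to the same hieroglyph is obtained from $D$ by independently choosing, for each ribbon, whether to insert a half-twist, and every such choice gives a valid disk with ribbons. Inserting a half-twist in the $i$-th ribbon flips the $(i,i)$ entry of the crossing matrix and changes nothing else: the off-diagonal entries record only whether the segments at which two ribbons are attached alternate along the boundary of the polygon, and that cyclic arrangement is untouched by twisting. Hence the crossing matrices coming from the given hieroglyph are exactly the matrices obtained from $M$ by changing some subset of its diagonal entries.

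Combining the two steps: by Theorem \ref{t:mohar}, a disk with ribbons with crossing matrix $M'$ can be cut out of a disk with $m$ M\"obius bands if and only if the rank over $\mathbb{Z}_2$ of $M'$ is at most $m$; hence the given disk with ribbons is (weakly) realizable on a disk with $m$ M\"obius bands if and only if at least one matrix obtained from $M$ by altering diagonal entries has rank over $\mathbb{Z}_2$ at most $m$, which by the definition of $R(M)$ is exactly the inequality $R(M) \le m$. I expect the only delicate point to be the bookkeeping in the second paragraph --- verifying that a half-twist in one ribbon toggles precisely one diagonal entry and nothing off the diagonal, and that all $2^{n}$ twisting patterns occur --- after which the corollary is purely formal.
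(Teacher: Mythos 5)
Your proof is correct and is exactly the derivation the paper leaves implicit (it states the corollary with no proof, relying on the observation in Remark \ref{c:hist} that a hieroglyph yields $2^n$ disks with ribbons whose crossing matrices differ only on the diagonal): weak realizability means some re-twisting is cuttable, re-twisting toggles only diagonal entries, and Mohar's theorem converts this to minimizing rank over diagonal modifications, i.e.\ to $R(M)\le m$. Your preliminary remarks — that ``реализуем'' must be read in the weak sense and that the zero-diagonal hypothesis is only a normalization since $R(M)$ ignores the diagonal — correctly resolve the only ambiguity in the statement.
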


В статье {\cite{Ko20}} для каждого фиксированного целого неотрицательного $m$ дан полиномиальный (по $n$) алгоритм, проверяющий
неравенство
$R(M)\leq m$. Этот и другие близкие результаты приводятся в  \cite{VGDNPS}.



Следующий факт показывает, что задача о реализуемости иероглифа на ленте Мёбиуса сводится лишь к частному случаю проблемы \ref{problem}.

\begin{claim}\label{c:matrix}
Не любая матрица из нулей и единиц является матрицей перекрещиваний 
какого-нибудь
диска с ленточками.
\end{claim}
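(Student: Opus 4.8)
The plan is to observe that the statement as literally written is immediate, and then to prove the quantitatively sharper fact that is actually wanted for the sentence preceding the claim: crossing matrices form a \emph{proper} subset of all $0$–$1$ matrices even after restricting to symmetric matrices with zero diagonal (the matrices to which Problem~\ref{problem} is addressed).

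For the statement as written it is enough to note that every crossing matrix is symmetric: off the diagonal, the $(i,j)$ and $(j,i)$ entries both record whether ribbons $i$ and $j$ cross. Hence no non-symmetric $0$–$1$ matrix, for instance the $2\times 2$ matrix with a single off-diagonal $1$, is a crossing matrix. This, however, says nothing about why hieroglyph realizability is only a special case of Problem~\ref{problem}, so the real work is the count below.

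For the sharper statement I would argue by counting. A disk with $n$ ribbons is completely described by its underlying hieroglyph together with the set of ribbons that are twisted, and the crossing matrix is a function of this data; therefore the set of $n\times n$ crossing matrices is the image of the set of disks with $n$ ribbons under the ``crossing matrix'' map, and in particular the number of $n\times n$ crossing matrices is at most (the number of hieroglyphs on $n$ chords) times $2^{n}$. A hieroglyph on $n$ chords is encoded by a fixed-point-free involution of $2n$ symbols (which position is paired with which), of which there are fewer than $(2n)!$; so there are fewer than $(2n)!\,2^{n}$ crossing matrices of size $n\times n$. On the other hand there are $2^{\binom n2}$ symmetric $0$–$1$ matrices of size $n\times n$ with zero diagonal. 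Since $\log_{2}\!\bigl((2n)!\,2^{n}\bigr)=O(n\log n)$ by Stirling while $\binom n2=\Theta(n^{2})$, for all sufficiently large $n$ one has $2^{\binom n2}>(2n)!\,2^{n}$; for such $n$ some symmetric $0$–$1$ matrix with zero diagonal is not a crossing matrix. In view of Corollary~\ref{c:R} this is exactly the sense in which realizability of hieroglyphs on the Möbius band is only a special case of Problem~\ref{problem}.

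I do not expect a genuine obstacle here; the only points needing a moment's care are that the count really bounds the set of crossing matrices (it does, being the image of a finite set under a fixed map) and the elementary asymptotic inequality, and the ``hard part'' is merely deciding how much to prove. If an explicit witness is preferred to a counting argument, I would instead remark that, after deleting the diagonal, crossing matrices are precisely the adjacency matrices of interlacement graphs of chord diagrams --- that is, of circle graphs --- and invoke the classical fact that not every graph is a circle graph (the wheel on six vertices being a standard example); but the counting proof has the advantage of being self-contained.
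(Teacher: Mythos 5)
Your proposal is correct, but it takes a genuinely different route from the paper. The paper proves Claim~\ref{c:matrix} by exhibiting an explicit small graph $G$ and showing its adjacency matrix is not a crossing matrix: three pairwise non-adjacent vertices $A,B,C$ force the endpoints of the corresponding (pairwise non-crossing) ribbons into one of four cyclic orders, and in each case some further vertex of $G$ would require a ribbon with an impossible crossing pattern. This is exactly the ``explicit witness'' you mention only in passing (the obstruction is that $G$ is not an interlacement graph of a chord diagram, i.e.\ not a circle graph). Your main argument is instead non-constructive: a counting bound showing that the at most $(2n)!\,2^{n}$ crossing matrices cannot exhaust the $2^{\binom n2}$ symmetric zero-diagonal $0$--$1$ matrices once $n$ is large. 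The count is sound --- a disk with ribbons is determined, as far as its crossing matrix is concerned, by the chord pairing, the labelling of the chords, and the twist set, and $(2n-1)!!\cdot n!\cdot 2^{n}=(2n)!$ is well below $2^{\binom n2}$ asymptotically --- and your preliminary observation that non-symmetric matrices already witness the literal statement is also correct. What the two approaches buy is different: yours is self-contained, requires no figure, and shows that crossing matrices are in fact asymptotically negligible among symmetric matrices, which makes the point before the claim (that hieroglyph realizability is only a special case of Problem~\ref{problem}) quantitatively; the paper's proof produces a concrete small example and identifies the combinatorial reason for non-realizability, at the cost of a case analysis tied to one figure. Neither argument has a gap.
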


\begin{proof}[\textbf{Доказательство}]
Рассмотрим матрицу смежности графа $G$, изображенного на рис. \ref{f:matrix}.
\begin{figure}[h]
    \center{\includegraphics[scale=0.5, width=150pt]{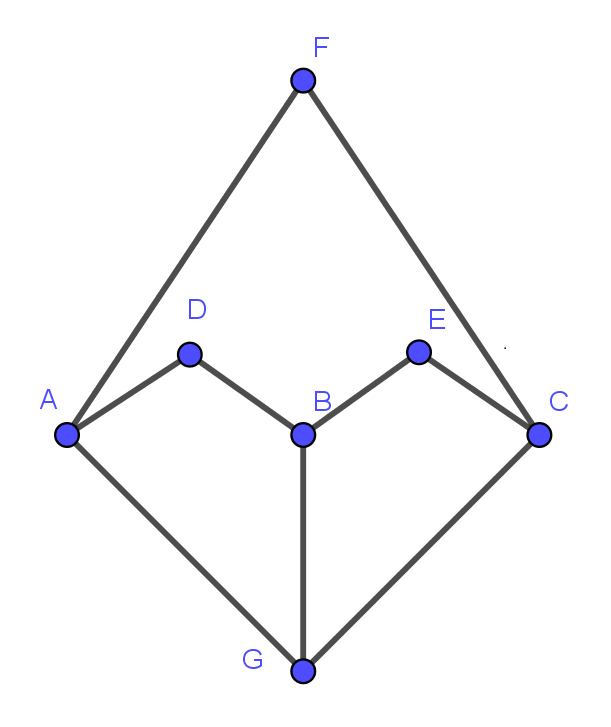}}
    \caption{К утверждению \ref{c:matrix}}
    \label{f:matrix}
\end{figure}
Его вершины $A$, $B$, $C$ попарно не соединены. Если существует диск с ленточками, матрица перекрещиваний которого равна матрице смежности графа $G$, то концы ленточек, соответствующих вершинам $A$,$B$,$C$, расположены в нём в одном из $4$ следующих циклических порядков: $aabbcc, abccba, acbbca, baccab.$ В случае циклического порядка $aabbcc$ ленточка, соответствующая вершине $G$, не может перекрещиваться со всеми тремя ленточками, соответствующими вершинам $A,B,C$. В случае циклического порядка $abccba$ ленточка, соответствующая вершине $F$, не может одновременно перекрещиваться с ленточками, соответствующим вершинам $A$ и $C$ и не перекрещиваться с ленточкой, соответствующей вершине $B$. Аналогично для остальных двух случаев. Получаем противоречие.
\end{proof}


Любые два диска с ленточками, у которых совпадают матрицы перекрещиваний, гомеоморфны, однако, они могут соответствовать разным иероглифам
---
например, диски с ленточками, соответствующие иероглифам $aabbcc$ и $abbacc$ соответственно (у каждого из них любые две буквы не перекрещиваются).

\section*{Приложение: прямое доказательство импликации (1) $\Longrightarrow$ (4) теоремы \ref{t:main}}

Иероглифы $abcacb$ и $ababcdcd$ не являются слабо реализуемыми на ленте Мёбиуса в силу теоремы 
\ref{t:betty}
Бетти и леммы \ref{l:curve}.

\begin{theorem}\label{t:betty} Объединение любых двух различных 
замкнутых ломаных
на ленте Мёбиуса разбивает её.
\end{theorem}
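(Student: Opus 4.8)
The plan is to run an Euler‑characteristic count on the complement, exploiting that a Möbius band $M$ has $\chi(M)=0$ and a single boundary circle. Write $Z=\gamma_1\cup\gamma_2$ for the union of the two (simple) closed polygonal curves, viewed as a finite graph embedded in $M$. First I would make two innocuous reductions: pass to a compact Möbius sub‑band containing $Z$ in its interior --- so that $Z\subset\operatorname{int}M$, $\chi(M)=0$, and $\partial M$ is one circle --- and note that this is exactly where the hypothesis bites, since otherwise the configuration $\gamma_1=\partial M$ would be an exception. Choose a regular (PL) neighbourhood $N\subset\operatorname{int}M$ of $Z$; then $N$ collapses onto $Z$, so $\chi(N)=\chi(Z)$, and decomposing $M=N\cup\overline{M\setminus N}$ along the circles of $\partial N$ gives, by additivity of $\chi$,
$$\chi\bigl(\overline{M\setminus N}\bigr)=\chi(M)-\chi(N)=-\chi(Z).$$

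Next I would record two elementary facts. First, $\chi(Z)\le 0$: if $\gamma_1$ and $\gamma_2$ are disjoint then $Z$ is a disjoint union of two circles and $\chi(Z)=0$; if they meet then $Z$ is a connected graph containing the two distinct embedded circles $\gamma_1,\gamma_2$, hence $b_1(Z)\ge 2$ and $\chi(Z)=1-b_1(Z)\le -1$. Second, $\overline{M\setminus N}$ has at least two boundary circles, namely $\partial M$ together with $\partial N\ne\varnothing$ (since $N$ is a proper compact subsurface); and when $\gamma_1\cap\gamma_2=\varnothing$ one may take $N$ to have two components, each an annulus or a Möbius band, so $\partial N$ has at least two circles and $\overline{M\setminus N}$ then has at least three.

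Now suppose for contradiction that $M\setminus Z$ is connected; then $\overline{M\setminus N}$ is a connected compact surface with $b$ boundary circles, so $\chi\bigl(\overline{M\setminus N}\bigr)\le 2-b$. If $\gamma_1,\gamma_2$ are disjoint this reads $0=-\chi(Z)\le 2-3=-1$, which is absurd; if they meet it reads $-\chi(Z)\le 2-2=0$, contradicting $\chi(Z)\le -1$. Hence in every case $M\setminus Z$ is disconnected, which is the assertion. For disjoint $\gamma_i$ there is also a quick homological route: over $\mathbb{Z}_2$ one has $\dim H_1(M,M\setminus Z)=\#\pi_0(Z)=2$ (Thom isomorphism for the, possibly twisted, normal bundles of the circles), and together with $\dim H_1(M;\mathbb{Z}_2)=1$ the long exact sequence of the pair $(M,M\setminus Z)$ forces $M\setminus Z$ to have at least two components.

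The only real friction I anticipate is the bookkeeping of the first paragraph: setting up the compact sub‑band, choosing $N$ so that $\partial M\subset\overline{M\setminus N}$, and fixing the convention under which the statement is actually true (the curves simple, and not equal to the boundary circle) --- without that convention the pair $\gamma_1=\partial M$, $\gamma_2=$ core is a genuine counterexample. Once these points are settled, the case analysis above is routine.
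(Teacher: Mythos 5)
Your argument is correct, and at bottom it is the same Euler--characteristic count as the paper's: your inequality $\chi(\overline{M\setminus N})\le 2-b$ for a connected complement is exactly the paper's Euler inequality $V-E+F\ge 1$ in disguise, and your observation $\chi(Z)=1-b_1(Z)\le -1$ for intersecting curves is the paper's ``$V<E$''. The differences are in packaging. The paper splits into two cases and outsources both key facts to \cite{Sk20}: the disjoint case is handled by citing Riemann's theorem for the Möbius band, and the intersecting case by citing the Euler inequality for connected graphs on the Möbius band. You instead derive everything from additivity of $\chi$ over the regular-neighbourhood decomposition and the classification of compact surfaces with boundary, which makes the proof self-contained and lets the same computation cover both the disjoint and the intersecting case (your separate homological remark for the disjoint case is redundant but harmless). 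Two points in your favour beyond the paper's version: you verify explicitly that two distinct embedded circles give two independent classes in the cycle space over $\mathbb{Z}_2$, hence $b_1(Z)\ge 2$ (the paper asserts $V<E$ with ``легко проверить''), and you correctly flag the boundary caveat --- for $\gamma_1=\partial M$ and $\gamma_2$ the core the complement is an open annulus, so the statement needs the convention that the curves lie in the interior (or that the band is open); the paper does not address this. One small thing to tighten: your reduction ``pass to a compact sub-band containing $Z$ in its interior'' should rather be ``embed $M$ in a slightly larger band $M'$'', and then you need the one-line remark that if $\partial M\not\subset Z$, connectivity of $M\setminus Z$ implies connectivity of $M'\setminus Z$, so the contradiction transfers back.
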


\begin{proof}[\textbf{Доказательство}]

Теорема \ref{t:betty} следует из неравенства Эйлера для ленты Мёбиуса \cite[2.8.2]{Sk20} и теоремы Римана для ленты Мёбиуса \cite[2.8.3.(b)]{Sk20}. Приведем детали.

Объединение двух попарно не пересекающихся ломаных разбивает ленту Мебиуса в силу теоремы \cite[2.8.3.(b)]{Sk20}.

Пусть данные ломаные пересекаются. Можно считать, что вершины и ребра этих ломаных образуют связный граф. 
Для любого связного графа с $V$ вершинами и $E$ рёбрами, изображённого без самопересечений на ленте Мёбиуса и разбивающего её на $F$ граней, выполнено неравенство Эйлера: $V-E+F\geq 1$. Легко проверить, что в данном случае $V<E$, поэтому $F>1$, значит, эти ломаные разбивают ленту Мёбиуса.

\end{proof}

\begin{lemma}\label{l:curve}
На любом диске с ленточками, соответствующем иероглифу $abcacb$ или $ababcdcd$, найдутся две кривые, пересекающиеся в конечном числе точек, которые не разбивают этот диск с ленточками.
\end{lemma}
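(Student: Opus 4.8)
The plan is to produce, for each of the two hieroglyphs, two \emph{distinct} simple closed polygonal curves on $D$ that meet transversally in finitely many points and whose complement in $D$ is connected. Represent $D$ concretely: the disk is a convex $2n$-gon whose edges, read cyclically, carry the letters of the hieroglyph, and for each letter a ribbon (twisted or not) joins the two edges bearing that letter. For a letter $x$ let $\sigma_x$ be its \emph{core curve} --- the straight chord of the polygon joining the two $x$-edges, closed up by the arc running down the middle of ribbon $x$. Then $\sigma_x$ is a simple closed curve; $\sigma_x$ and $\sigma_y$ meet transversally in a single point when the letters $x,y$ cross and are disjoint otherwise; and $\sigma_x\neq\sigma_y$ always. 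Removing $\sigma_x\cup\sigma_y$ from $D$ has the following effect: cutting the polygon along the two chords produces several polygonal regions; every ribbon $z\notin\{x,y\}$ remains intact and still joins the same two of these regions, whatever its twisting; and ribbons $x$ and $y$ are each cut lengthwise along their core into two sub-bands, each sub-band joining two of the regions --- \emph{which} two being determined by the twisting of that ribbon, since a half-twist interchanges the two halves of an edge. Hence $D\setminus(\sigma_x\cup\sigma_y)$ is connected if and only if the resulting region-adjacency multigraph is connected, and one finishes by a short finite check.

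For $H=ababcdcd$ I would take $\sigma_a$ and $\sigma_c$; as $a$ and $c$ do not cross, these curves are disjoint. The chord of $a$ cuts off a region $R_a$ containing the single $b$-edge lying between the two $a$-edges, the chord of $c$ cuts off a region $R_c$ containing the single $d$-edge lying between the two $c$-edges, and one region $R_0$ remains. Whatever the twistings, the intact ribbon $b$ joins $R_a$ to $R_0$ and the intact ribbon $d$ joins $R_c$ to $R_0$; so $R_a,R_0,R_c$ already lie in one component, and the sub-bands of $a$ and $c$, which attach only to these three regions, cannot increase the number of components. Thus $D\setminus(\sigma_a\cup\sigma_c)$ is connected.

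For $H=abcacb$ I would take $\sigma_a$ and $\sigma_b$; as $a$ and $b$ cross, these meet in one point, and cutting the hexagon along the two chords gives four regions $R_{12},R_{24},R_{46},R_{61}$ meeting at that point, where (say) $R_{24}$ contains the $c$-edge lying between an $a$-edge and a $b$-edge and $R_{46}$ contains the other $c$-edge. The intact ribbon $c$ gives an edge joining $R_{24}$ and $R_{46}$, for every twisting. Depending on its twist, the sub-bands of ribbon $a$ join $R_{12}$ to $R_{24}$ and $R_{61}$ to $R_{46}$, or else $R_{12}$ to $R_{46}$ and $R_{61}$ to $R_{24}$; and the sub-bands of ribbon $b$ join $R_{12}$ to $R_{61}$ and $R_{24}$ to $R_{46}$, or else $R_{12}$ to $R_{46}$ and $R_{24}$ to $R_{61}$. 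In each of the four resulting cases the multigraph on $\{R_{12},R_{24},R_{46},R_{61}\}$ --- carrying the edge $R_{24}R_{46}$ from ribbon $c$ together with two edges apiece from ribbons $a$ and $b$ --- is connected, as one checks directly. Hence $D\setminus(\sigma_a\cup\sigma_b)$ is connected.

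The one point that needs care is the bookkeeping in the reduction: correctly identifying, for a ribbon cut along its core, which pair of half-edges its two sub-bands join and how that pairing flips under a half-twist. Once that is pinned down, everything else is the elementary check above --- and note in particular that the two curves produced are genuinely distinct and meet in finitely many points, which is exactly what lets them be played against Theorem~\ref{t:betty} (a single curve, such as the core of a Möbius band, can fail to separate it, but two distinct closed curves cannot).
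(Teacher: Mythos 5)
Your proof is correct and follows essentially the same strategy as the paper's: exhibit two explicit closed curves on each ribbon disk and verify, by a case analysis over the possible twistings, that their complement stays connected (which is then played against Theorem~\ref{t:betty}). The only difference is presentational --- you take the single-ribbon core curves $\sigma_a,\sigma_c$ resp.\ $\sigma_a,\sigma_b$ and reduce the connectivity check to a finite region-adjacency multigraph, which makes the argument figure-free, whereas the paper reads the same connectivity check off its pictures; both case analyses come out the same.
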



\begin{lemmaproof}
На рис. \ref{f:abcacb} и рис. \ref{f:ababcdcd} показаны примеры таких пар кривых на дисках с неперекрученными ленточками, соответствующих иероглифам $abcacb$ и $ababcdcd$. Покажем, что если заменить одну или несколько ленточек любого из этих дисков с ленточками на перекрученную, то соответствующие пары кривых по-прежнему не будут разбивать полученные диски с ленточками.

\begin{figure}[h]
    \center{\includegraphics[scale=0.5, width=250pt]{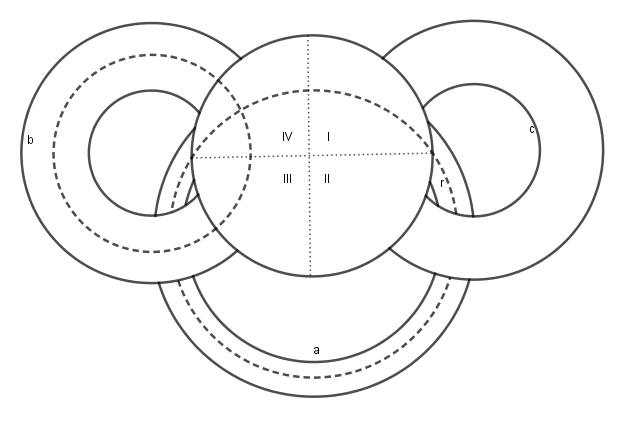}}
    \caption{Диск с ленточками, соответствующий иероглифу $abcacb$, и 
две кривые
на нём.}
\label{f:abcacb}
\end{figure}


Пусть в диске с ленточками, изображенном на рис. \ref{f:ababcdcd}, некоторые ленточки заменены на перекрученные. Очевидно, ленточка $c$ находится в одной компоненте связности дополнения 
кривых до диска с ленточками. 
Тогда области $I$ и $II$ лежат в этой компоненте. 
Следовательно,
в ней целиком лежит ленточка
$a$.
Значит, все области $I,II,III,IV$ лежат в этой компоненте. Тогда 
в ней лежит и ленточка
$b$. 
Значит, кривые не разбивают диск с ленточками.

\begin{figure}[h]
    \center{\includegraphics[scale=0.5, width=250pt]{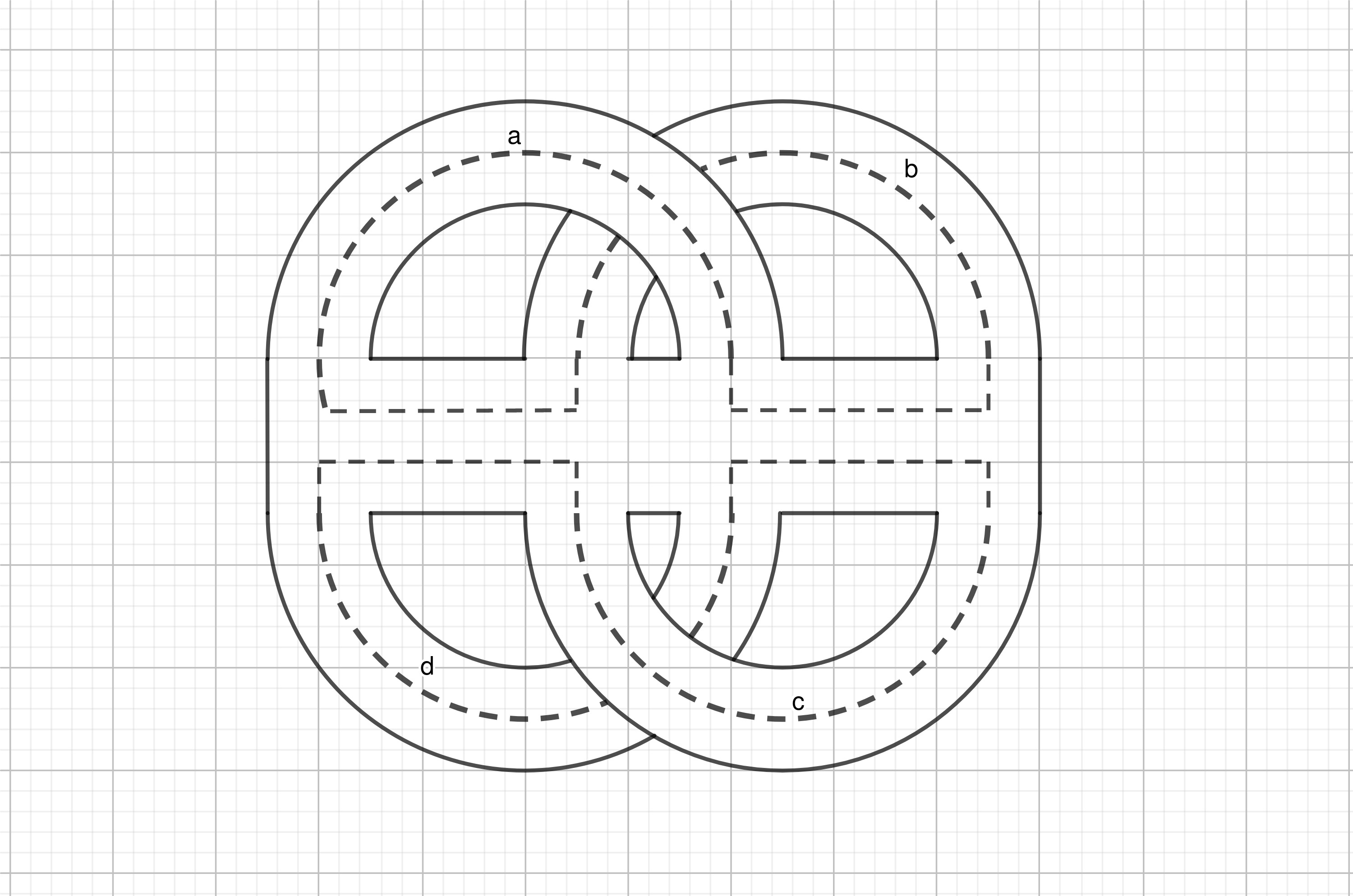}}
    \caption{Диск с ленточками, соответствующий иероглифу $ababcdcd$, и две кривые на нем.}
\label{f:ababcdcd}
\end{figure}

Для иероглифа $ababcdcd$ и диска с ленточками, изображённого на рис. \ref{f:ababcdcd}
, выполнено следующее. Если
заменить ленточку $b$ на перекрученную, а ленточку $a$ не заменять, то каждая из частей ленточки $a$, на которые её разбивает кривая, лежит в одной компоненте связности с центром 
диска. Таким образом,
ленточка $a$ целиком лежит в одной компоненте связности, тогда и ленточка $b$ тоже (каждая её часть не отделена от какой-то из частей ленточки $a$). 
То же верно,
если заменить ленточку $a$ на перекрученную, а ленточку $b$ не заменять. Если же обе ленточки $a$ и $b$ заменить на перекрученные, то связность обеих ленточек можно проверить непосредственно. Для ленточек $c$ и $d$ можно провести полностью аналогичное рассуждение.

\end{lemmaproof}


\bigskip

\noindent Бикеев Артур Игоревич, Московский физико-технический институт \\
bikeev99@mail.ru

\end{document}